\def\seq#1#2#3{#1_{#2},\,\ldots,#1_{#3}}
\def\w{\widetilde}
\def\h{\widehat}
\def\VV{{\underline{V}}}
\def\vv{{\underline{v}}}
\def\tt{{\underline{t}}}
\def\ww{\underline{w}}
\def\1{\underline{1}}
\def\P{\Bbb P}
\def\Z{\Bbb Z}
\def\C{\Bbb C}
\def\OO{{\cal O}}
\newtheorem{theorem}{Theorem}
\newtheorem{statement}{Statement}
\newenvironment{definition}
{\smallskip\noindent{\bf Definition\/}:}{\smallskip\par}
\newenvironment{example}
{\smallskip\noindent{\bf Example\/}.}{\smallskip\par}
\newenvironment{remark}
{\smallskip\noindent{\bf Remark\/}.}{\smallskip\par}
\newenvironment{remarks}
{\smallskip\noindent{\bf Remarks\/}.}{\smallskip\par}
\newenvironment{proof}
{\noindent{\bf Proof\/}.}{{ $\Box$}\smallskip\par}
\title{Equivariant Poincar\'e series of filtrations
\footnote{Math. Subject Class. 14B05, 16W70,
16W22. Keywords: finite group actions,
filtrations, Poincar\'e series.} }
\author{
A.~Campillo
\and F.~Delgado \and S.M.~Gusein-Zade
\thanks{
Partially supported by the grant MTM2007-64704 (with the help of
FEDER Program). Third author
is also partially supported by the grants RFBR-10-01-00678,
NSh-709.2008.1 and Visitantes distinguidos en la UCM -- Grupo
SANTANDER (Ay16/09).
} }
\date{}
\begin{document}
\def\eps{\varepsilon}

\maketitle

\begin{abstract}
We offer a new approach to a definition of an equivariant
version of the Poincar\'e series. This Poincar\'e series is defined
not as a power series, but as an element of the Grothendieck ring
of $G$-sets with an additional structure.
We compute this Poincar\'e
series for natural filtrations on the ring of germs of functions
on the plane $(\C^2,0)$ with a finite group representation.
\end{abstract}

\section*{Introduction}\label{sec0}
The notion of the Poincar\'e series of a multi-index filtration
(say on the ring of germs of functions on a variety) was
introduced in \cite{CDK}. It can be expressed as an integral
with respect to the Euler characteristic (duly defined) over the
projectivization of the space of germs of functions (see e.g.
\cite{IJM}).
For some natural multi-index filtrations on the ring of germs
of functions the Poincar\'e series are related with topological
invariants of singularities. For example the Poincar\'e series of
the multi-index filtration on the ring $\OO_{\C^2,0}$ of germs
of functions in two variables defined by orders of a function on
branches of a plane curve singularity tends to coincide with the
Alexander polynomial (in several variables) of the corresponding
link: see e.g. \cite{IJM}. An equivariant version of the
Poincar\'e
series (for an action of a finite group $G$) was defined
in \cite{MMJ}.
For universal Abelian covers of rational  surface singularities
the equivariant Poincar\'e series was computed in \cite{FAA}.
A. N\'emethi has found that the equivariant Poincar\'e
series of Abelian
covers of surface singularities are connected with computation of
Seiberg--Witten invariants of their links: \cite{Nemethi}. The
equivariant Poincar\'e series in \cite{MMJ} is defined as an
integral with respect to the Euler characteristic
over the projectivization of the union of subspaces of
equivariant functions (with respect to one-dimensional
representations of the group). Thus it takes into account only
one-dimensional representations ignoring all others. Therefore it
seems to be better adjusted for Abelian groups. Nevertheless even
in this case it ignores some important information. For example
it does not define, in general, the usual (non-equivariant)
Poincar\'e series.

Here we offer a new approach to a definition of an equivariant
version of the Poincar\'e series. This Poincar\'e series is defined
not as a power series, but as an element of the Grothendieck ring
of $G$-sets with an additional structure. (This Grothendieck
ring, in
a natural way, is isomorphic to the series ring $\Z[[\seq t1r]]$
for the case of the trivial group.)
The idea to define equivariant analogues of (numerical)
invariants as elements of the Grothendieck ring of $G$-sets was
used, for example, in \cite{L-R, G-W} where an equivariant
analogue of the Lefschetz number was defined as an element of
the Burnside ring which coincides with the Grothendieck ring of
finite $G$-sets.

We compute the equivariant Poincar\'e
series for natural filtrations on the ring of germs of functions
on the plane $(\C^2,0)$ with a finite group representation.
In this cases the expressions for the equivariant Poincar\'{e} series
are of A'Campo type, i.e. this Poincar\'{e} series are
products/ratios of binomials of the form $(1-U)$ for some
``irreducible" elements $U$.

\section{Equivariant Poincar\'{e} series}

Let $(V,0)$ be a germ of a complex analytic variety with an
action of a finite group $G$. The group
$G$ acts on the ring $\OO_{V,0}$ of germs of functions on
$(V,0)$: $a^* f(x)= f(a^{-1}x)$ ($f\in \OO_{V,0}$, $a\in G$,
$x\in V$). A function
$v: \OO_{V,0}\to \Z_{\ge 0} \cup \{+\infty\}$ is called an
{\it order function} if $v(\lambda f)= v(f)$ for a non-zero
$\lambda\in\C$ and $v(f_1+f_2)\ge \min \{v(f_1), v(f_2)\}$.
(If besides that $v(f_1 f_2)=v(f_1)+v(f_2)$, the function $v$ is
a valuation.) A multi-index filtration of the ring $\OO_{V,0}$
is defined by a collection $\seq v1r$ of order functions:
\begin{equation}\label{eq1}
J(\vv) = \{ f\in \OO_{V,0} : \vv(f)\ge \vv\}\; ,
\end{equation}
where $\vv=(\seq v1r)\in \Z^{r}_{\ge 0}$,
$\vv(f)= (v_1(f), \ldots, v_r(f))$ and 
$(\seq{v'}1r)\ge (\seq{v''}1r)$ if and only if
$v'_i\ge v''_i$ for all $i=1,\ldots, r$.

Let us assume that the filtration $J(\vv)$ is {\it finitely
determined}. This means that, for any $\vv\in \Z^{r}_{\ge 0}$,
there exists an integer $k$ such that
${\mathfrak m}^k\subset J(\vv)$ where ${\mathfrak m}$ is the maximal
ideal in $\OO_{V,0}$. Let $\P\OO_{V,0}$ be the projectivization
of the ring $\OO_{V,0}$. For $k\ge 0$, let
$J^k_{V,0} = \OO_{V,0}/{\mathfrak m}^{k+1}$ be the space of $k$-jets
of functions. It is a finite-dimensional vector space. Let
$\P J^k_{V,0}$ be the projectivization of the jet space and let
$\P^* J^k_{V,0} = \P J^k_{V,0} \cup \{*\}$. Here one can say that $\P^* J^k_{V,0}$
is the factor space of $J^k_{V,0}$ by the standar $\C^*$-action:
the added point $*$ corresponds to the orbit of the origin. There
are natural maps $\pi_k: \P \OO_{V,0}\to \P^* J^k_{V,0}$ and
$\pi_{k,\ell}: \P J^k_{V,0}\to \P^* J^{\ell}_{V,0}$ for
$k\ge\ell$. A subset
$B\subset \P \OO_{V,0}$ is called {\it cylindric} if there exists
$k$
and a constructible subset $X\subset \P J^k_{V,0}$ ($\subset \P^*
J^k_{V,0}$) such that $B = \pi_k^{-1}(X)$. The Euler
characteristic $\chi(B)$ of a cylindric subset
$B = \pi_k^{-1}(X)$ is defined as $\chi(X)$. A function $\psi$ on
$\P \OO_{V,0}$ with values in an Abelian group $A$ is called
{\it cylindric} if, for any $a\in A$, $a \neq 0$, $\psi^{-1}(a)$
is cylindric. The integral of a cylindric function
$\psi : \P \OO_{V,0} \to A$ with respect to the Euler
characteristic is defined as
$$
\int\limits_{\P\OO_{V,0}} \psi d \chi = \sum_{a\in A, a \neq 0}
\chi(\psi^{-1}(a)) a
$$
(if the right hand side makes sense in the group $A$; otherwise
the function $\psi$ is not integrable).

In the same way one can define cylindric sets, their Euler
characteristics and the integral with respect to the Euler
characteristic for the factor $\P\OO_{V,0}/G$ of the space
$\P\OO_{V,0}$ by an action of a finite group $G$ which respects
the filtration by the powers of the maximal ideal.

The (usual) Poincar\'{e} series of the multi-index filtration
(\ref{eq1}) can be defined as
$$
P_{\{v_i\}} (\seq t1r) = \int\limits_{ \P \OO_{V,0}}
\tt^{\,\vv(f)} d\chi\; ,
$$
where $\tt=(\seq t1r)$,
$\tt^{\,\vv}= t_1^{v_1}\cdot\ldots \cdot t_r^{v_r}$;
$\tt^{\,\vv(f)}$
is considered as a function on $\P \OO_{V,0}$ with values in
the ring (Abelian group) $\Z[[\seq t1r]]$, $t^{+\infty}$ is assumed to be equal to zero.

We shall define an equivariant version of the Poincar\'{e} series of
the filtration (\ref{eq1}) as an element of the Grothendieck ring
of sets with an additional structure.

\begin{definition}
A (``locally finite") $(G,r)-\mbox{set}$ $A$ is a
triple $(X^A, \ww^A, \alpha^A)$ where
\begin{itemize}
\item $X^A$ is a $G$-set, i.e. a set with a $G$-action;
\item $\ww^A$ is a function on $X^A$ with values in
$\Z^r_{\ge 0}$;
\item $\alpha^A$ associates to each point $x\in X^A$ a
one-dimensional representation $\alpha^A_{x}$ of the isotropy
subgroup $G_x= \{ a\in G : ax =x\}$ of the point $x$;
\end{itemize}
satisfying the following conditions:
\begin{enumerate}
\item[1)] $\alpha^A_{ax}=a \alpha^A_x a^{-1}$ for $x\in X^A$,
$a\in G$;
\item[2)] for any $\ww\in \Z^r_{\ge 0}$ the set
$\{x\in X^A: \ww^A(x)\le \ww\}$ is finite.
\end{enumerate}
\end{definition}

All (locally finite) $(G,r)$-sets form an Abelian semigroup in
which the sum is defined as the disjoint union. Let the Cartesian
product $A\times B$
of two $(G,r)$-sets $A$ and $B$ be the triple where
$X^{A\times B}=X^A\times X^B$ with the natural action of the
group
$G$,
$\ww^{A\times B}(x,y) =\ww^A(x)+\ww^{B}(y)$,
$\alpha^{A\times B}_{(x,y)} =
\alpha^A_{x}\cdot \alpha^B_{y}$. This makes the semigroup of
$(G,r)$-sets a semiring. The unit 1 is represented by the
one-point set $X^1=\{*\}$ with $\ww^{1}(*)=0$ and
$\alpha^1_{*}=1$.

Let
$K_0((G,r)-\mbox{sets})$
be the Grothendieck ring of
(locally finite) $(G,r)$-sets, i.e. the Grothendieck group
corresponding to the semigroup described above with the
multiplication defined by the Cartesian product.

\begin{example}
For the trivial group $G=\{e\}$, the ring
$K_0((G,r)-\mbox{sets})$ is isomorphic to the ring
$\Z[[\seq t1r]]$ of power series where $t_i$ is
the one-point set $X^{t_i}=\{*\}$ with $\ww^{t_i}(*)=
(0,\ldots, 1,\ldots,0)$, $1$ is at the $i$-th place.
\end{example}

\begin{remarks}

{\bf 1.}
Let $K_0'$ be the subring of the Grothendieck ring $K_0((G,r)-\mbox{sets})$
generated by elements with the trivial action of the group $G$.
Let $R(G)$ be the ring of representations of the group $G$ and
let $R_1(G)$ be its subring generated by one-dimensional
representations. The ring $K'_{0}$ in a natural way is isomorphic
to the ring $R_1(G)[[\seq t1r]]$ of power series with
coefficients in the ring $R_1(G)$. This is just the ring where
the equivariant Poincar\'{e} series defined in \cite{MMJ} lives.

{\bf 2.}
For an arbitrary group $G$, one can choose a system of elements
which generates (after completion) the ring $K_0((G,r)-\mbox{sets})$.
This identifies the Grothendieck ring $K_0((G,r)-\mbox{sets})$
with the ring of series in several variables modulo certain
relations. However this is somewhat artificial.
\end{remarks}

\begin{example}
Let $G=\Z_2$ and let $r=1$. A natural set of (multiplicative) generators of the
Grothendieck ring $K_0((G,r)-\mbox{sets})$ consists of the
following elements:
\begin{enumerate}
\item
$t_1$: $X^{t_1}=\Z_2$, the function $w^{t_1}$ takes two values
$0$ and $1$ (the isotropy subgroup of a point is trivial and thus
$\alpha^{t_1}_x=1$);
\item
$t_2$: $X^{t_2}=\Z_2$, the function $w^{t_2}$ takes value $0$ at
each point;
\item
$t_3$: $X^{t_3}= \{*\}=\Z_2/\Z_2$, $w^{t_3}(*)=1$,
$\alpha^{t_3}_{*}=1$;
\item
$t_4$:
$X^{t_4} = \{*\}=\Z_2/\Z_2$, $w^{t_4}(*)=0$,
$\alpha^{t_4}_{*}=-1$;
\end{enumerate}
One can see that $t_4^2=1$, $t_2^2=2t_2$, $t_2t_4=t_2$, $t_1t_4=t_1$, $t_1t_2=2t_1$ and
$K_0((G,r)-\mbox{sets}) = \Z[[t_1, t_2, t_3,
t_4]]/\langle t_4^2 -1, t_2^2-2t_2, t_2t_4-t_2, t_1t_4-t_1, t_1t_2-2t_1 \rangle$.
Pay attention that natural additive generators of $K_0((G,r)-\mbox{sets})$ like $A=A_{m_1m_2}$: $X^A=\Z_2$, $w^A$ takes the values $m_1$ and $m_2$ ($\alpha^A_{x}=1$ for each $x$) have somewhat complicated expressions in terms of the multiplicative generators $t_i$, $i=1, \ldots, 4$. For example $A_{15}=t_1^4 t_3+t_2t_3^3-4t_1^2t_3^2$.
The ring $K_0((G,r)-\mbox{sets})$ can be considered as a completion of the ring of regular functions (polynomials) on the union $\C^2\cup \C^1\cup \C^1$ (the variety defined by the ideal $\langle t_4^2 -1, t_2^2-2t_2, t_2t_4-t_2, t_1t_4-t_1, t_1t_2-2t_1 \rangle$).
\end{example}

Let again the filtration $\{J(\vv)\}$ be defined by the order functions $\seq v1r$
by (\ref{eq1}). For an element $f\in \P\OO_{V,0}$, that is for a
function germ defined up to a constant factor,
let $G_f$ be the isotropy subgroup of the corresponding point of
$\P\OO_{V,0}$:
$G_f =\{a \in G : a^*f =\lambda_f(a)f\}$. The map $a\mapsto\lambda_f(a)$ defines a
one-dimensional representation $\lambda_f$ of the subgroup $G_f$. (The representation
$\lambda_f$ is defined by the class of $f$ in the projectivization $\P\OO_{V,0}$.)
For an element $f\in \P\OO_{V,0}$
(or rather for its orbit $Gf$), let $T_{f}$ be the element of
the Grothendieck ring $K_0((G,r)-\mbox{sets})$ represented by the orbit $Gf$ of $f$
(as a $G$-set) with  $\ww^{T_f}(a^*f)=\vv(a^*f)$
and $\alpha^{T_f}_{a^*f}=\lambda_{a^*f}$ ($a\in G$).
One should exclude those $f$ for which $\ww^{T_f}(a^*f)\notin \Z^r_{\ge 0}$, i.e. $w_i^{T_f}(a^*f)=+\infty$ for some $a\in G$. One can formally define $T_f$ to be equal to zero in this case. (This is an analogue of the assumption $t^{+\infty}=0$ in the definition of the usual Poincar\'e series.)

Let us consider $T: f\mapsto T_f$ as a function on
$\P \OO_{V,0}/G$ with values in the Grothendieck ring
$K_0((G,r)-\mbox{sets})$.
One can see that this function is
cylindric. This follows from the condition that the
filtration $\{J(\vv)\}$ is finitely determined. Moreover, the function $T$ is
obviously integrable (with respect to the Euler characteristic).

\begin{definition}
The equivariant Poincar\'{e} series $P^G_{\{v_i\}}$ of the filtration $\{J(\vv)\}$ is
defined by
$$
P^G_{\{v_i\}} = \int\limits_{\P\OO_{V,0}/G} T_f d\chi
\in K_0((G,r)-\mbox{sets})
\; .
$$
\end{definition}

\begin{example}
Let $F_G\subset \P\OO_{V,0}$ be the set of fixed points of the action of the group
$G$ on $\P \OO_{V,0}$. One has $F_G/G \cong F_G$ and $F_G$
consists of classes of
functions equivariant with respect to the $G$-action: $a^*f=\lambda_f(a)f$.
For $f\in F_G$, $T_f\in R_1(G)[[\seq t1r]]$ (see Remark 1 above).
One can see that the
integral $\int\limits_{F_G} T_f d \chi$ as an element of $R_1(G)[[\seq t1r]]$
coincides with the equivariant Poincar\'{e} series
$P^G_{\{v_i\}}(\seq t1r)$ in the sense of \cite{MMJ}.
\end{example}

\begin{statement}\label{st1}
The equivariant Poincar\'{e} series $P^G_{\{v_i\}}$ determines
the equivariant Poincar\'{e} series
$P^G_{\{v_i\}}(\seq t1r)$ in the sense of \cite{MMJ}.
\end{statement}

\begin{proof}
One can see that the Grothendieck ring $K_0((G,r)-\mbox{sets})$
is a semidirect product of two subrings
$K'_0 \simeq R_1(G)[[\seq t1r]]$ and $K''_0$ generated by
elements
represented by $G$-sets with the trivial action of the group $G$
and with actions without fixed
points respectively. Let
$\Pi': K_0((G,r)-\mbox{sets})\to K'_0$ be the projection onto the
first summand. One can see that
$P^G_{\{v_i\}}(\seq t1r)=\Pi'(P^G_{\{v_i\}})$.
\end{proof}

Suppose that all the order functions $v_i$ are such that $v_i(f)=+\infty$ if and only if $f=0$. This takes place, for example, if all $v_i$ are divisorial valuations.

\begin{statement}\label{st2}
In the described situation the equivariant Poincar\'{e} series  $P^G_{\{v_i\}}$ determines
the (usual) Poincar\'{e} series
$P_{\{v_i\}}(\seq t1r)\in \Z[[\seq t1r]]$.
\end{statement}

\begin{proof}
Let $\Pi: K_0((G,r)-\mbox{sets}) \to \Z[[\seq t1r]]$ be the homomorphism which sends
an element $A$ of $K_0((G,r)-\mbox{sets})$  to
$\sum\limits_{x\in X^A} \tt^{\ww^A(x)}$. The Fubini formula (see
e.g. \cite{IJM}) applied
to the factorization map $p: \P\OO_{V,0} \to \P\OO_{V,0}/G$ gives
\begin{equation}\label{eqn9}
P_{\{v_i\}}(\seq t1r) = \int\limits_{\P\OO_{V,0}}\tt^{\,\vv}
d\chi =
\int\limits_{\P\OO_{V,0}/G} \Pi(T_f) d \chi = \Pi(
P^G_{\{v_i\}})\;
.
\end{equation}
\end{proof}

\begin{remark}
Another situation when Equation~\ref{eqn9} (and thus Statement~\ref{st2}) holds is when the set $\{v_i\}$ is $G$-invariant, i.e. contains all shifts $v_{ig}$ of its elements $v_i$ for $g\in G$: $v_{ig}(f):=v_{i}((g^{-1})^*f)$.
\end{remark}

\section{The equivariant Poincar\'{e} series for filtrations on
$\OO_{\C^2,0}$}

Let $\pi: ({\cal{X, D}})\to (\C^2,0)$ be a modification of the
plane $(\C^2,0)$ by a
$G$-invariant sequence of blowing-ups. This means that the action of $G$ on the plane
$\C^2$ lifts to an action on the smooth surface ${\cal X}$ and
the exceptional divisor
${\cal D}= \pi^{-1}(0)$ (a normal crossing divisor on ${\cal X}$)
is invariant with respect to the
$G$-action.
At each intersection point $x$ of two components of the
exceptional divisor ${\cal D}$, each of these components is invariant with respect to
the isotropy subgroup $G_x =\{a\in G : a x =x\}$ of the  point $x$. All components
$E_{\sigma}$ ($\sigma\in \Gamma$) of the exceptional divisor ${\cal D}$ are
isomorphic to the complex projective line $\C\P^1$.
Let ${\stackrel{\bullet}{E}}_\sigma$ be the ``smooth part" of the
component $E_\sigma$, i.e. $E_\sigma$ itself minus intersection points with
all other components of the exceptional divisor ${\cal D}$, let
${\stackrel{\bullet}{\cal
D}}=\bigcup\limits_\sigma{\stackrel{\bullet}{E}}_\sigma$
be the smooth part of the exceptional divisor ${\cal D}$, and let
$\widehat{\cal D}={\stackrel{\bullet}{\cal D}}/G$ be the
corresponding factor space,
i.e. the space of orbits of the action of the group $G$ on
${\stackrel{\bullet}{\cal D}}$.
Let $p: {\stackrel{\bullet}{\cal D}} \to \h{\cal D}$ be the
factorization map.

For $x\in {\stackrel{\bullet}{\cal D}}$, let $\w{L_x}$ be a germ
of a smooth curve on ${\cal X}$ transversal to
${\stackrel{\bullet}{\cal D}}$ at the point $x$ and invariant
with respect to the isotropy subgroup $G_x$ of the point $x$. The
image $L_x=\pi(\w{L_x})\subset (\C^2,0)$ is called a {\it
curvette} at the point $x$. Let the curvette $L_x$ be given by an
equation $h'_x=0$, $h'_x\in\OO_{\C^2,0}$. Let
$h_x = \sum\limits_{a\in
G_x}{\dfrac{h'_x}{a^*h'_x}}(0)\cdot a^*
h'_x$. The germ $h_x$ is $G_x$-equivariant and $\{h_x=0\}=L_x$.
Moreover, in what follows we assume that the germ $h_x$ is fixed
this way for one point $x$ of each $G$-orbit and is defined by
$h_{ax} = a h_x a^{-1}$ for other points of the orbit.

Let $\{\Xi\}$ be a stratification of the space (in fact of a
smooth curve) $\widehat{\cal D}$ (${\widehat{\cal
D}}=\coprod \Xi$) such that:
\begin{enumerate}
\item[1)] each stratum $\Xi$ is connected;
\item[2)] for each point $\h{x}\in \Xi$ and for each point $x$
from
its preimage $p^{-1}(\h{x})$, the conjugacy class of the
isotropy
subgroup $G_x$ of the point $x$ is the same, i.e. depends only on
the stratum $\Xi$.
\end{enumerate}
The last is equivalent to say that, over each
stratum $\Xi$, the map
$p: {\stackrel{\bullet}{\cal D}} \to \h{\cal D}$ is a covering.

For a component $E_{\sigma}$ of the exceptional divisor
${\cal D}$, let $v_{\sigma}$ be the corresponding divisorial
valuation on the ring $\OO_{\C^2,0}$: for $f\in \OO_{\C^2,0}$,
$v_\sigma(f)$ is the order of zero of the lifting $f\circ\pi$ of
the function $f$ along the component $E_\sigma$. Let
$\{1,\ldots, r\}$ be a subset of $\Gamma$, and let $\seq v1r$ be
the corresponding divisorial valuations. They define the
multi-index filtration (\ref{eq1}).

For a point $x\in {\stackrel{\bullet}{\cal D}}$, let $T_x$ be the
element of the Grothendieck ring $K_0((G,r)-\mbox{sets})$ defined
by $T_x = T_{h_x}$ where $h_x$ is a $G_x$-equivariant function
defining a curvette at the point $x$.
The element $T_x$ is well-defined. i.e. does not depend on the
choice of the function $h_x$. One can see that the
element $T_x$ is one and the same for all points from the
preimage of a stratum $\Xi$ and therefore it will be denoted by
$T_{\Xi}$.

\begin{theorem}
\begin{equation}\label{eq2}
P^G_{\{v_i\}} = \prod\limits_{\{\Xi\}} (1-T_{\Xi})^{-\chi(\Xi)}
\; .
\end{equation}
\end{theorem}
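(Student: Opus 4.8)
The plan is to reproduce, inside the Grothendieck ring $K_0((G,r)-\mbox{sets})$, the proof of the A'Campo-type formula for the usual Poincar\'e series given in \cite{CDK,IJM}: integrate over the resolution, organize functions by the way their strict transforms meet the smooth part ${\stackrel{\bullet}{\cal D}}$ of the exceptional divisor, and finish with a Macdonald-type formula. The new ingredient is that the scalar integrand $\tt^{\,\vv(f)}$ is replaced by the $(G,r)$-set valued integrand $T_f$ and the base of the integration becomes the quotient $\h{\cal D}={\stackrel{\bullet}{\cal D}}/G$; accordingly the scalar Macdonald formula must be upgraded to an equivariant one inside $K_0((G,r)-\mbox{sets})$.

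First I would set up the fibration. To a germ $f\in\P\OO_{\C^2,0}$ whose strict transform is smooth, reduced, transversal to ${\stackrel{\bullet}{\cal D}}$ and disjoint from the intersection points of the components of $\cal D$, one associates the effective zero-cycle it cuts on ${\stackrel{\bullet}{\cal D}}$. Passing to $G$-orbits, this gives a map from (the generic part of) $\P\OO_{\C^2,0}/G$ to the space $\coprod_k (S^k{\stackrel{\bullet}{\cal D}})/G$ of $G$-orbits of effective zero-cycles on ${\stackrel{\bullet}{\cal D}}$. Exactly as in the non-equivariant case, one checks that this is a fibration for integration with respect to the Euler characteristic: the higher-order terms of $f$ are free, so the fibres are affine spaces of Euler characteristic $1$, while the non-generic loci are absorbed into the non-reduced cycles. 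Under this map the integrand $T_f$ is identified with the $(G,r)$-set given by the $G$-orbit of the cycle, whose weight function records the sums of the curvette weights $\mm_\Xi=\vv(h_x)$ and whose representation datum is the induced one-dimensional representation $\lambda_f$ on the stabilizer. The Fubini formula of \cite{IJM} then reduces the theorem to computing $\sum_{k\ge 0}\int_{(S^k{\stackrel{\bullet}{\cal D}})/G} T\, d\chi$.

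The heart of the matter — and the step I expect to be the main obstacle — is to prove the equivariant Macdonald formula $\sum_{k\ge 0}\int_{(S^k{\stackrel{\bullet}{\cal D}})/G} T\, d\chi = \prod_{\{\Xi\}}(1-T_\Xi)^{-\chi(\Xi)}$. The difficulty is that the base does not split as a product over the strata: writing ${\stackrel{\bullet}{\cal D}}=\coprod_\Xi p^{-1}(\Xi)$, one has $S^k{\stackrel{\bullet}{\cal D}}=\coprod_{\sum k_\Xi=k} \prod_\Xi S^{k_\Xi}p^{-1}(\Xi)$ only before quotienting, and $G$ acts diagonally across the strata, so $(S^k{\stackrel{\bullet}{\cal D}})/G$ is not a product of the $(S^{k_\Xi}p^{-1}(\Xi))/G$. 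Consequently $T_f$ is genuinely not multiplicative over the cycle (the Cartesian product of orbits in $K_0((G,r)-\mbox{sets})$ is strictly larger than the orbit of a product), and the factorization can hold only after the $(G,r)$-set valued Euler-characteristic integral has been taken. The plan is therefore to prove a Fubini-type factorization lemma in $K_0((G,r)-\mbox{sets})$ asserting that the integral factors over the strata, reducing the claim to the single-stratum identity $\sum_{k\ge 0}\int_{(S^k p^{-1}(\Xi))/G} T\, d\chi = \sum_{k\ge 0}\chi(S^k\Xi)\,T_\Xi^{\,k}$; the right-hand side equals $(1-T_\Xi)^{-\chi(\Xi)}$ by the scalar Macdonald formula $\sum_k\chi(S^k\Xi)u^k=(1-u)^{-\chi(\Xi)}$. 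Establishing this single-stratum identity — i.e. matching the $k$-th Cartesian power $T_\Xi^{\,k}$ weighted by $\chi(S^k\Xi)$ with the $(G,r)$-set obtained by integrating $T$ over $G$-orbits of degree-$k$ cycles on the covering $p^{-1}(\Xi)\to\Xi$ — is the crux: one must trivialize the covering along an orbit-type stratification and reconcile the diagonal $G$-action, coincident points, and orbits with nontrivial stabilizer (together with the one-dimensional representations they carry) with the Cartesian-product structure of the ring. Once this is in place, the remaining bookkeeping and the reductions of the first two steps are routine transcriptions of the arguments in \cite{CDK,IJM}.
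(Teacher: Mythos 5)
Your overall architecture is the paper's (map germs to their intersection cycles on ${\stackrel{\bullet}{\cal D}}$, pass to $G$-orbits, finish with a Macdonald-type identity), and you have correctly located where the difficulty sits, but you do not resolve it: the step you yourself call ``the crux'' --- the identity $\sum_{k}\int_{(S^k{\stackrel{\bullet}{\cal D}})/G}T\,d\chi=\prod_{\Xi}(1-T_\Xi)^{-\chi(\Xi)}$ --- is deferred to an unproven single-stratum lemma, and the route you sketch (factor the quotient of the symmetric powers over the strata, then trivialize the covering along orbit types) is not carried out. The missing idea in the paper bypasses your factorization lemma entirely: introduce the configuration space $\w Y$ of effective divisors on the quotient curve $\h{\cal D}={\stackrel{\bullet}{\cal D}}/G$, which genuinely splits as $\prod_{\Xi}\coprod_k S^k\Xi$ because the strata are disjoint subsets of $\h{\cal D}$, set $\w T(\w y)=\prod_\Xi T_\Xi^{k_\Xi}$ there, and consider the map $q:\h Y\to\w Y$ sending the orbit of a cycle $\sum x_i$ to $\sum p(x_i)$. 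The fibre of $q$ over $\w y=\sum \w x_i$ is the finite set of $G$-orbits in the product $\prod_i Gx_i$, and the Cartesian product in $K_0((G,r)-\mbox{sets})$ was defined precisely so that it decomposes as the disjoint union of these orbits, with weights adding and the one-dimensional representations multiplying; hence one has the pointwise identity $\w T(\w y)=\sum_{\h y\in q^{-1}(\w y)}\h T(\h y)$, and Fubini along $q$ gives $\int_{\h Y}\h T\,d\chi=\int_{\w Y}\w T\,d\chi=\prod_\Xi(1-T_\Xi)^{-\chi(\Xi)}$. That one observation is the entire content of the step you flag as the main obstacle.

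There is a second genuine gap in your first step. You assert that under the map to cycles ``the integrand $T_f$ is identified with the $(G,r)$-set given by the $G$-orbit of the cycle'' and that the fibres have Euler characteristic $1$. The identification is false pointwise: the isotropy subgroup of $f$ in $\P\OO_{\C^2,0}$ can be a proper subgroup of the isotropy subgroup of its image cycle, so $T_f\ne\h T(\h I(f))$ in general, and knowing only $\chi(\h I^{-1}(\h y))=1$ does not compute $\int_{\h I^{-1}(\h y)}T\,d\chi$ since $T$ is not constant on the fibre. The paper repairs this by exhibiting the canonical representative $f_y=\prod h_{x_i}$, whose isotropy group does coincide with that of $y$ (so $T_{f_{\h y}}=\h T(\h y)$), and by fibring the complement of $f_{\h y}$ in $\h I^{-1}(\h y)$ into $\C^*$-families $g_\lambda=f_y+\lambda(g-f_y)$ along which $T$ is constant, so that the complement contributes zero to the integral. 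Finally, your remark that ``non-generic loci are absorbed into the non-reduced cycles'' does not handle germs whose strict transform passes through intersection points of components of ${\cal D}$; one needs the paper's reduction to a fixed $\VV$, working modulo classes with $\ww^A\le\VV$ and refining the resolution by further $G$-invariant blow-ups (which only add strata of zero Euler characteristic).
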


\begin{proof}
Let $Y$ be the configuration space of effective divisors on
${\stackrel{\bullet}{\cal D}}$. The space $Y$ has the natural
representation of the form
$$
Y\ = \coprod\limits_{\{k_\sigma\}} \left(
\prod\limits_{\sigma\in\Gamma}  S^{k_\sigma}
{\stackrel{\bullet}{E_\sigma}} \right) =
\prod\limits_{\sigma\in \Gamma}\left(\coprod\limits_{k=0}^\infty
S^k {\stackrel{\bullet}{E_\sigma}} \right)\; ,
$$
where $S^kZ = Z^k/S_k$ is the $k$-th symmetric power of the space
$Z$. There is the natural action of the group $G$ on the space
$Y$. Let $\h{Y}=Y/G$ be the space of $G$-orbits on $Y$.

For a point $y \in Y$, $y=\sum\limits_{i=1}^n x_i$, let $T_y$ be
the element of the Grothendieck ring $K_0((G,r)-\mbox{sets})$ defined by
$T(y) = T_{\prod h_{x_i}}$. This way one has a $G$-invariant map
$T: Y \to K_0((G,r)-\mbox{sets})$ and therefore a map
$\h T: \h Y \to K_0((G,r)-\mbox{sets})$.

Let $\w{Y}$ be the configuration space of effective divisors on the (smooth) curve
$\h{\cal D} = {\stackrel{\bullet}{\cal D}}/G$. The space $\w{Y}$ has the natural
representation (corresponding to the stratification $\{\Xi\}$) of the form
$$
\w{Y} = \coprod\limits_{\{k_\Xi\}} \left(\prod\limits_{\Xi} S^{k_{\Xi}} \Xi \right) =
\prod\limits_{\Xi}\left(\coprod\limits_{k=0}^\infty
S^k \Xi \right).
$$
Let $\w{T}: \w{Y}\to K_0((G,r)-\mbox{sets})$ be the function on
$\w Y$ defined by
$$
\w{T}(\w y) = \prod\limits_{\Xi} T_{\Xi}^{\, k_{\Xi}}
$$
for $\w{y}\in \prod\limits_{\Xi}S^{k_{\Xi}} \Xi$.

There is the natural map $q: \h Y\to \w Y$ which sends the orbit of a point
$y=\sum\limits_{i=1}^m x_i\in Y$ to the point
$\w y=\sum\limits_{i=1}^m \w{x_i}$, where $\w{x_i}=p(x_i)$ is the
orbit of the point
$x_i$. The preimage of a point
$\w y= \sum\limits_{i=1}^m \w{x}_i\in \w{Y}$ in $Y$ (i.e. under
the map $q\circ p$)
is the product of the orbits $Gx_i$, $p(x_i)=\w{x_i}$, with the
natural action of the group $G$. Moreover
$\w{T}(\w{y})=\sum\limits_{\h{y}\in q^{-1}(\w{y})} \h{T}(\h{y})$.

Therefore
$$
\int\limits_{\h{Y}} \h{T}d \chi =
\int\limits_{\w{Y}} \w{T}d \chi\; .
$$

One has
$$
\int\limits_{\widetilde Y} {\widetilde T}  d\chi
=\prod\limits_{\Xi}\left(\sum\limits_{k=0}^{\infty}\chi(S^k\Xi)T_{\Xi}^k\right)\,.
$$
Using the well-known equation
$$
\sum\limits_{k=0}^\infty \chi(S^kZ)t^k = (1-t)^{-\chi(Z)}\,,
$$
one gets
$$
\int\limits_{\widetilde Y} {\widetilde T} d\chi
=\prod\limits_{\Xi}\left(1-T_{\Xi}\right)^{-\chi(\Xi)}\,.
$$

Let us fix an arbitrary $\VV\in\Z_{\ge 0}^r$ and let us prove
Equation (\ref{eq2}) up to elements in the ring
$K_0((G,r)-\mbox{sets})$ represented by triples $(X^A, \ww^A,
\alpha^A)$ with $\ww^A(x)\le \VV$ for all $x\in X^A$.
This will imply Equation (\ref{eq2}) itself. For that we can
suppose that the modification
$\pi:({\cal X,D})\to (\C^2,0)$ is such that, for any $f\in
\OO_{\C^2,0}$ with $\vv(f)\le \VV$, the strict
transform of the curve $\{f=0\}$ intersects the exceptional
divisor ${\cal D}$ only at smooth points.
This can be achieved by an additional $G$-invariant series of
blowing-ups of intersection points of components of the exceptional divisor.
Such additional blowing-ups  add, to the stratification
$\{\Xi\}$ of ${\stackrel{\bullet}{\cal D}}$, strata with zero
Euler characteristics
and therefore do not change the right hand side of Equation
(\ref{eq2}). Let  $\OO_{\C^2,0}^{\VV}$ be the set of
$f\in \OO_{\C^2,0}$ with $\vv(a^*f)\le \VV$ for all $a\in G$.

Let $I:\P\OO_{\C^2,0}^{\VV}\to Y$ be the map which sends the
class of a function $f\in\OO_{\C^2,0}^{\VV}$ to the intersection
of the strict transform  of the zero-level curve $\{f=0\}$ with
the exceptional divisor ${\cal D}$ (i.e. to the collection of the
intersection points
counted with the corresponding multiplicities). One has the commutative diagram
$$
\begin{CD}
\P\OO^{\VV}_{\C^2,0} @>I>> Y \\
@VpVV @VVpV \\
\P\OO^{\VV}_{\C^2,0}/G @>\h{I}>> \h{Y}
\end{CD}
$$

In general, $T\ne\widehat T\circ \widehat I$ because the
isotropy subgroup of a point in $\P\OO_{\C^2,0}^{\VV}$ can be a
proper  subgroup
of the  isotropy subgroup of its image in $Y$ and therefore the
$G$-orbits of a point  in $\P\OO_{\C^2,0}^{\VV}$ and of its
image in $Y$ can be different
(as $G$-sets). (If these isotropy subgroups coincide, one has the equality.)

To compute the integral
of the function $T$ over
${\P\OO_{\C^2,0}^{\VV}/G}$,
we shall, for each point $\widehat{y}\in\widehat{Y}$, construct a point
$f_{\widehat{y}}$ in $\h{I}^{-1}(\widehat{y})$ (i.e. the orbit of a
function) so that $T_{f_{\widehat{y}}}=\widehat T(\widehat y)$ and the complement
$\h I^{-1}(\widehat{y})\setminus\{f_{\widehat{y}}\}$ is fibred
into $\C^*$-families with the function $T$ constant along fibres. This implies that
$$
\int\limits_{\h{I}^{-1}(\widehat{y})}{T}d\chi = \widehat T(\widehat y)
$$
and the Fubini formula applied to the map $\widehat I$ gives (up to
terms under consideration)
$$
\int\limits_{\P\OO_{\C^2,0}^{\VV}/G}Td\chi =
\int\limits_{\widehat{Y}}
{\widehat{T}}d\chi =
\prod\limits_{\Xi}\left(1-T_{\Xi}\right)^{-\chi(\Xi)}\,.
$$

Let  $\widehat{y}\in\widehat{Y}$ be the orbit of
$y=\sum\limits_{i=1}^{m} x_i \in Y$. Let
$f_y:=\prod\limits_{i=1}^{m} h_{x_i}$. The isotropy
subgroup of $f_y$ in $\P\OO_{\C^2,0}$ coincides with the isotropy
subgroup of $y$ and therefore $T_{f_{\widehat{y}}}={\widehat{T}}(\widehat{y})$
(here $f_{\widehat{y}}=p(f_{y})$ is the image of  $f_y$ in $\P\OO_{\C^2,0}/G$).

Let $g\in I^{-1}(y)$.  The strict transforms of the curves
$\{g=0\}$ and  $\{f_y=0\}$ intersect the exceptional divisor
${\cal D}$ at the same points
with the same multiplicities. Therefore the ration
$\psi=\dfrac{g\circ \pi}{f_y\circ \pi}$ of the liftings ${g\circ
\pi}$ and ${f_y\circ \pi}$ of the functions
$g$ and $f_y$ to the space ${\cal X}$ of the modification has
neither
zeros no poles on the exceptional divisor ${\cal D}$ and thus is
constant on it.
Therefore (multiplying $g$ by a constant) one can assume that
the ratio $\psi$ is equal to $1$ on ${\cal D}$.

Let $g_{\lambda}:= f_y+ \lambda(g-f_y)$, $\lambda\in\C^*$. One
has $\dfrac{g_{\lambda}\circ \pi}{f_y\circ \pi}=1$ on the
exceptional divisor ${\cal D}$.
Therefore $I(g_{\lambda})=I(f_{y})=y$. Moreover the isotropy
subgroup of each $g_{\lambda}$ in  $\P\OO_{\C^2,0}$ coincides with
the isotropy subgroup of $g$
and thereore $T_{p(g_{\lambda})}$ is constant. This proves the
statement.
\end{proof}

As above, let
$\C^2$ be the complex plane with a representation of a finite
group $G$. Let $(C,0)\subset (\C^2,0)$ be a (generally
speaking reducible)
plane curve singularity and let $C=\bigcup\limits_{i=1}^{r}C_i$
be its representation as the union of irreducible components.
Let $\varphi_i:(\C, 0)\to(\C^2, 0)$ be a parameterization
(uniformization) of the component $C_i$, i.e., a germ of an
analytic map such that ${\rm{Im}}\,\varphi_i=C_i$ and $\varphi_i$ is an
isomorphism between $\C$ and $C_i$ outside of the origin.
For a germ $g\in{\OO}_{\C^2, 0}$, let $v_i(g)$ be
the degree of the leading term in the power series
decomposition of the function $g\circ\varphi_i:(\C,0)\to \C$
at the origin:
$$
g\circ\varphi_i(\tau)=
a\cdot\tau^{v_i(f)}+\ \mbox{terms~of~higher~degree} ,
\ \ a\ne 0\; .
$$
(If $g\circ\varphi_i\equiv 0$, one assumes $v_i(g)=+\infty$.)
The function $v_i: {\OO}_{\C^2, 0}\to \Z_{\ge 0}\cup \{+\infty\}$ is a valuation.
The collection $v_1$, \dots, $v_r$ of the valuations corresponding to the components of the curve $C$ defines a multi-index
filtration $\{J(\vv)\}$ by (\ref{eq1}).

Let $\pi:({\cal X,D})\to(\C^2,0)$ be a $G$-invariant embedded
resolution
of the curve $C$. This means that ${\cal X}$ is a smooth complex
surface with an action of the group $G$
commuting with $\pi$,  $\pi$ is a proper complex analytic map
which is an isomorphism outside of the exceptional divisor
${\cal D}=\pi^{-1}(0)$, the total transform $\pi^{-1}(C)$ of the
curve
$C$ is a normal crossing divisor on ${\cal X}$
(and therefore ${\cal D}$ is a normal crossing divisor as well),
and moreover, at each intersection point $x$
of two components of the
total transform $\pi^{-1}(C)$ of the curve $C$, each of these
components is invariant with respect to the isotropy subgroup
$G_x=\{a\in G: ax=x\}$ of the point $x$. This resolution can be obtained from  $(\C^2,0)$ by a ($G$-invariant)
sequence of blowing-ups at points in the preimage of the origin.

Let ${\stackrel{\circ}{\cal D}}$ be the "smooth" $G$-invariant part
of
the exceptional divisor ${\cal D}$ in the total transform
$\pi^{-1}(C)$
of the curve $C$, i.e. ${\cal D}$ itself minus all intersection
points
of all the components of  $\pi^{-1}(C)$ and of its images under the $G$-action.
Let $\{\Xi\}$ be a stratification of
${\stackrel{\circ}{\cal D}}/G$
defined in the same way as above (for divisorial valuations).
(In fact one can simply
take the intersections of the strata of the stratification for
the divisorial case with ${\stackrel{\circ}{\cal D}}/G$.) Let
$T_{\Xi}\in K_0((G,r)-\mbox{sets})$
be defined as above. The same arguments give the following statement.

\begin{theorem}
$$
P^G_{\{v_i\}}=\prod\limits_{\Xi}\left(1-T_{\Xi}\right)^{-\chi(\Xi)}\,.
$$
\end{theorem}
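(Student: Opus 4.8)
The plan is to repeat, essentially word for word, the proof of the preceding Theorem, with the smooth part $\stackrel{\bullet}{\cal D}$ of the exceptional divisor replaced throughout by $\stackrel{\circ}{\cal D}$ (which additionally deletes the intersection points of ${\cal D}$ with the strict transform of $C$, together with their $G$-translates), and to check that every step survives this replacement. First I would introduce the configuration space $Y=\prod_\sigma\coprod_{k}S^k\stackrel{\circ}{E}_\sigma$ of effective divisors supported on $\stackrel{\circ}{\cal D}$, the $G$-invariant map $T:Y\to K_0((G,r)-\mbox{sets})$ sending $y=\sum_{i}x_i$ to $T_{\prod_i h_{x_i}}$, and the induced map $\h T$ on $\h Y=Y/G$; on the quotient side I would form $\w Y=\prod_\Xi\coprod_k S^k\Xi$ with $\w T(\w y)=\prod_\Xi T_\Xi^{\,k_\Xi}$. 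The relation $\w T(\w y)=\sum_{\h y\in q^{-1}(\w y)}\h T(\h y)$ for the natural map $q:\h Y\to\w Y$ holds exactly as before, so $\int_{\h Y}\h T\,d\chi=\int_{\w Y}\w T\,d\chi$, and the symmetric-power identity $\sum_k\chi(S^kZ)t^k=(1-t)^{-\chi(Z)}$ applied factor by factor gives $\prod_\Xi(1-T_\Xi)^{-\chi(\Xi)}$.

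The point that genuinely needs the hypotheses of this Theorem is the construction of the map $I:\P\OO_{\C^2,0}^{\VV}\to Y$ and the identification of the fibre integral. Fixing $\VV\in\Z^r_{\ge 0}$, I would again prove the formula modulo elements with weights $\ww^A(x)\not\le\VV$. Here each $v_i$ is the curve valuation attached to the parameterization $\varphi_i$, so the key geometric input is that a germ $g$ with $\vv(g)\le\VV$ has bounded contact with every branch $C_i$; hence, after a further $G$-invariant sequence of blowing-ups (adding only strata of zero Euler characteristic, which do not affect the right-hand side), the strict transform of $\{g=0\}$ meets ${\cal D}$ only at points of $\stackrel{\circ}{\cal D}$, that is, away from the strict transform of $C$ and away from the intersection points of $\pi^{-1}(C)$. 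This is exactly what guarantees that a curvette $h_x$ at a point $x\in\stackrel{\circ}{E}_\sigma$ has finite valuations $v_i(h_x)$, so that $T_x=T_\Xi$ is a well-defined element of $K_0((G,r)-\mbox{sets})$ with weights in $\Z^r_{\ge 0}$: deleting the points lying over the strict transform of $C$ is precisely the step that excludes the value $+\infty$. The map $I$ then sends the class of $g$ to the collection of these intersection points counted with multiplicity, and the commutative square relating $I$, $\h I$ and $p$ is set up as in the divisorial case.

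For the fibre computation I would, for each $\h y\in\h Y$ represented by $y=\sum_{i=1}^m x_i$, take $f_y:=\prod_{i=1}^m h_{x_i}$; its isotropy subgroup coincides with that of $y$, whence $T_{f_{\h y}}=\h T(\h y)$ for $f_{\h y}=p(f_y)$. For an arbitrary $g\in I^{-1}(y)$ the strict transforms of $\{g=0\}$ and $\{f_y=0\}$ meet ${\cal D}$ at the same points with the same multiplicities, so the ratio $\psi=(g\circ\pi)/(f_y\circ\pi)$ has neither zeros nor poles on ${\cal D}$ and may be normalized to $1$ there; then $g_\lambda:=f_y+\lambda(g-f_y)$ satisfies $I(g_\lambda)=y$ for $\lambda\in\C^*$ and has $T_{p(g_\lambda)}$ constant in $\lambda$. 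This exhibits $\h I^{-1}(\h y)\setminus\{f_{\h y}\}$ as a union of $\C^*$-families along which $T$ is constant, so $\int_{\h I^{-1}(\h y)}T\,d\chi=\h T(\h y)$, and the Fubini formula for $\h I$ finishes the proof.

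The step I expect to be the main obstacle is the first claim of the second paragraph: that for the curve valuations (rather than divisorial ones) a bounded-contact germ $g$ with $\vv(g)\le\VV$ actually separates from the strict transform of $C$ after finitely many $G$-invariant blow-ups, so that $I$ lands in the configuration space on $\stackrel{\circ}{\cal D}$ and all the weights $v_i(h_x)$ remain finite. Once this finiteness and the precise location of the intersection points are secured, the combinatorial and integration steps are identical to those of the previous Theorem, which is what the phrase ``the same arguments'' refers to.
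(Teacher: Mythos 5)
Your proposal is correct and follows exactly the route the paper intends: the paper's entire proof of this theorem is the single sentence ``The same arguments give the following statement,'' and your write-up is precisely that adaptation, with the one genuinely new point (that for $g$ with $\vv(g)\le\VV$ the strict transform of $\{g=0\}$ separates from the strict transform of $C$ after further $G$-invariant blow-ups, so $I$ lands in configurations on ${\stackrel{\circ}{\cal D}}$ and the weights stay finite) correctly identified and justified. The only minor imprecision is your remark that deleting the points over the strict transform of $C$ is ``precisely'' what excludes $+\infty$: it is also needed so that $T_x$ is locally constant on strata (at such a point the contact of a curvette with $C_i$ jumps even when it stays finite), but this does not affect the validity of the argument.
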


\end{document}